\newenvironment{proofscottreg}{\begin{trivlist}\item[]
\textbf{Proof of Theorem \ref{Scottregzero}}\ }{\hspace*{\fill} $\Box$\end{trivlist}}
\newenvironment{proofhhreg}{\begin{trivlist}\item[]
\textbf{Proof of Theorem \ref{HHregzero}}\ }{\hspace*{\fill} $\Box$\end{trivlist}}
\newenvironment{prooffreg}{\begin{trivlist}\item[]
\textbf{Proof of Theorem \ref{Fregzero}}\ }{\hspace*{\fill} $\Box$\end{trivlist}}
\def\CF{{\mathcal{F}}}
           \def\tenk{\otimes_k}     
\def\Br{\mathrm{Br}}             
           \def\tenkP{\otimes_{kP}}
           \def\tenkQ{\otimes_{kQ}}
\def\ev{\mathrm{ev}}  
\def\Ext{\mathrm{Ext}}           
\def\Hom{\mathrm{Hom}}           
\def\ker{\mathrm{ker}}           
\def\Im{\mathrm{Im}}             
\def\Ind{\mathrm{Ind}}
\def\max{\mathrm{max}}
\def\op{\mathrm{op}}
\def\pr{\mathrm{pr}}
\def\reg{\mathrm{reg}}
\def\Res{\mathrm{Res}}           
\def\res{\mathrm{res}}        
\def\Sol{\mathrm{Sol}}          
\def\Tr{\mathrm{Tr}}             
\def\tr{\mathrm{tr}}
\begin{document}
\maketitle{On the Castelnuovo-Mumford regularity of the cohomology of 
fusion systems and of the Hochschild cohomology 
of block algebras} {Radha Kessar and Markus Linckelmann}{Department of Mathematical  Sciences,  City University London,
Northampton Square, London EC1V 0HB, 
U.K. \newline 
Email:  radha.kessar.1@city.ac.uk, markus.linckelmann.1@city.ac.uk}

\begin{abstract} 
Symonds' proof of Benson's regularity conjecture implies that
the regularity of the cohomology of a fusion system and that of
the Hochschild cohomology of a $p$-block of a finite group
is at most zero. Using results of Benson, Greenlees, and Symonds, 
we show that in both cases the regularity is equal to zero.
\end{abstract}

\spc

Let $p$ be a prime and $k$ an algebraically closed field of 
characteristic $p$. Given a finite group $G$, a {\it block algebra of} 
$kG$ is an indecomposable direct factor $B$ of $kG$ as a $k$-algebra.
A {\it defect group of of a block algebra $B$ of $kG$} is a minimal 
subgroup $P$ of $G$ such that $B$ is isomorphic to a direct 
summand of $B\tenkP B$ as a $B$-$B$-bimodule. The defect groups of 
$B$ form a $G$-conjugacy class of $p$-subgroups of $G$.
The Hochschild cohomology of $B$ is the algebra $HH^*(B)=$
$\Ext^*_{B\tenk B^\op}(B)$, where $B^\op$ is the opposite algebra of
$B$, and where $B$ is regarded as a $B\tenk B^\op$-module via left and
right multiplication. By a result of Gerstenhaber, the algebra
$HH^*(B)$ is graded-commutative; that is, for homogeneous elements
$\zeta\in$ $HH^m(B)$ and $\eta\in$ $HH^m(B)$ we have $\eta\zeta=$
$(-1)^{nm}\zeta\eta$, where $m$, $n$ are nonnegative integers. 
In particular, if $p=$ $2$, then $HH^*(B)$ is commutative, and if
$p$ is odd, then the even part $HH^\ev(B)=$ $\oplus_{n\geq 0} HH^{2n}(B)$
is commutative and all homogeneous elements in odd degrees square to zero.  
The extension of the Castelnuovo-Mumford regularity to
graded-commutative rings with generators in arbitrary positive
degrees is due to Benson \cite[\S 4]{Benreg1}. We follow
the notational conventions in Symonds \cite{Symreg}.
In particular, if $p$ is odd and 
$T=$ $\oplus_{n\geq 0} T^n$ is a finitely generated 
graded-commutative $k$-algebra and $M$ a finitely generated graded 
$T$-module, we denote by $\reg(T,M)$ the Castelnuovo-Mumford
regularity of $M$ as a graded $T^{\ev}$-module, where
$T^{\ev}=\oplus_{n\geq 0} T^{2n}$ is the even part of $T$. 
We set $\reg(T)=$ $\reg(T,T)$; that
is, $\reg(T)$ is the Castelnuovo-Mumford regularity of $T$ as a
graded $T^{\ev}$-module. See also \cite{Bencomm} and \cite{Eisenbud}
for more background material and references.  We note  that  Benson's definition of regularity   uses 
the ring $T$ instead of $T^{\ev}$, but the two  definitions  are  
equivalent. This can be seen  by  noting that  \cite[Proposition  1.1]{Symreg} also holds for  finitely generated graded commutative $k$-algebras.

\begin{thm} \label{HHregzero}
Let $G$ be a finite group and $B$ a block algebra of $kG$.
We have $\reg(HH^*(B))=$ $0$.
\end{thm}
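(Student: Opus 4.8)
The plan is to prove the two inequalities $\reg(HH^*(B))\le 0$ and $\reg(HH^*(B))\ge 0$ separately: the first by reduction to Symonds' bound for ordinary group cohomology, the second by a local cohomology theorem of Benson and Greenlees for the Hochschild cohomology of symmetric algebras. (Throughout, $(-)^{\ev}$ denotes the even part of a graded-commutative ring, understood to be the whole ring when $p=2$.) Some preliminaries first. Since $B$ is an indecomposable $k$-algebra, $HH^0(B)=Z(B)$ is a finite-dimensional local $k$-algebra, so $HH^*(B)$, and with it $HH^*(B)^{\ev}$, has a well-defined maximal graded ideal $\mathfrak{m}$, and $\reg(HH^*(B))$ is defined. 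Next, there is the standard isomorphism $HH^*(kG)\cong\bigoplus_{[g]}H^*(C_G(g);k)$ of graded modules over $H^*(G;k)$, the sum being over representatives $g$ of the conjugacy classes of $G$, where $H^*(G;k)$ acts on the summand indexed by $g$ through restriction to $C_G(g)$, the $H^*(G;k)$-module structure on the left being induced by the canonical graded algebra homomorphism $H^*(G;k)\to HH^*(kG)$. By the Evens--Venkov theorem each $H^*(C_G(g);k)$, hence $HH^*(kG)$ and its direct algebra factor $HH^*(B)$, is module-finite over the Noetherian, finitely generated $k$-algebra $H^*(G;k)$; in particular these algebras are Noetherian. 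Finally, writing $kG=B_1\times\cdots\times B_r$ as a product of block algebras with $B=B_1$, one has $HH^*(kG)=HH^*(B_1)\times\cdots\times HH^*(B_r)$ as graded algebras, so $HH^*(B)$ is a direct summand of $HH^*(kG)$ as a graded $H^*(G;k)$-module.

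For the upper bound I would use that Castelnuovo--Mumford regularity is computed from local cohomology, and hence is additive over finite direct sums (equal to the maximum of the summands' regularities, a summand being at most the whole) and is unchanged under a module-finite extension of the coefficient ring (the radical of the extended irrelevant ideal being the irrelevant ideal of the larger ring). Since $HH^*(B)$ is module-finite over $HH^*(B)^{\ev}$, which is module-finite over the image of $H^*(G;k)^{\ev}$, and similarly for each $C_G(g)$, one obtains
$$\reg(HH^*(B))=\reg_{H^*(G;k)^{\ev}}(HH^*(B))\le\reg_{H^*(G;k)^{\ev}}(HH^*(kG))=\max_{[g]}\reg(H^*(C_G(g);k)),$$
and Symonds' proof of Benson's regularity conjecture gives $\reg(H^*(C_G(g);k))\le 0$ for every $g$; hence $\reg(HH^*(B))\le 0$.

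For the lower bound I would use that block algebras are symmetric, so that $B\cong\Hom_k(B,k)$ as $B\otimes_k B^{\op}$-modules. For the Hochschild cohomology of a symmetric algebra Benson and Greenlees have a local cohomology theorem --- the analogue of Greenlees' theorem for $H^*(G;k)$ --- providing a convergent spectral sequence $H^s_{\mathfrak{m}}(HH^*(B))^t\Rightarrow HH_{-s-t}(B)$ whose abutment is the Hochschild homology of $B$. Now $HH_n(B)=\mathrm{Tor}^{B\otimes_k B^{\op}}_n(B,B)$ vanishes for $n<0$, while $HH_0(B)=B/[B,B]\ne 0$. A class of total degree $0$ in the abutment must be detected on some $E_2$-term $H^s_{\mathfrak{m}}(HH^*(B))^{-s}$, so, writing $a_i$ for the largest internal degree in which $H^i_{\mathfrak{m}}(HH^*(B))$ is nonzero, we get $a_s\ge-s$ for some $s$, whence $\reg(HH^*(B))=\max_i(a_i+i)\ge 0$. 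Combined with the upper bound this yields $\reg(HH^*(B))=0$.

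I expect the substantive point to be the lower bound, specifically the use of the Benson--Greenlees local cohomology theorem for $HH^*(B)$: one must check that its hypotheses hold for the Hochschild cochain algebra of $B$ --- this is where the fact that $B$ is symmetric enters, placing that algebra in the relevant Gorenstein (Poincar\'e-duality) framework --- and one must identify the abutment of the resulting spectral sequence with Hochschild homology, together with its internal grading, so that the elementary non-vanishing $HH_0(B)\ne 0$ converts into the stated inequality for the local cohomology of $HH^*(B)$. The upper bound, by contrast, is essentially a formal consequence of Symonds' theorem once the decomposition of $HH^*(kG)$ and the behaviour of regularity under module-finite extensions and direct summands have been recorded.
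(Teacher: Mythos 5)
Your upper bound is sound: the reduction of $\reg(HH^*(B))$ to $\max_{[g]}\reg(H^*(C_G(g);k))$ via the centralizer decomposition of $HH^*(kG)$, the invariance of regularity under module-finite change of the (Noetherian) coefficient ring, and Symonds' theorem is essentially the argument of Kessar--Linckelmann, \emph{On the Hilbert series of Hochschild cohomology of block algebras}, Proposition~5, which is exactly what the paper cites for $\reg(HH^*(B))\le 0$. The gap is in your lower bound: the ``Benson--Greenlees local cohomology theorem for the Hochschild cohomology of symmetric algebras'', with a spectral sequence $H^s_{\mathfrak m}(HH^*(B))^t\Rightarrow HH_{-s-t}(B)$, is not an available result, and you cannot simply ``check its hypotheses'' because no such general theorem has been proved. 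Indeed, for an arbitrary finite-dimensional symmetric algebra $A$ it is not even known that $HH^*(A)$ is Noetherian, so a local cohomology theorem at this level of generality is out of reach; what you would actually have to do is \emph{construct} the spectral sequence for block algebras, and the only known way to do that is to leave the purely ring-theoretic setting of $HH^*(B)$ and return to group cohomology.

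That return is precisely how the paper closes the argument, and it is worth recording because your duality intuition (symmetry of $B$, abutment equal to Hochschild homology, $HH_0(B)\ne 0$) is realized there in disguise. One uses the Siegel--Witherspoon isomorphism $HH^*(B)\cong H^*(G;B)$ of graded $H^*(G;k)$-modules, where $G$ acts on $B$ by conjugation, so that Greenlees' actual theorem --- the spectral sequence $H_m^{s,t}(H^*(G;M))\Rightarrow H_{-s-t}(G;M)$ for a finitely generated $kG$-module $M$ --- applies. Since $B$ is a $p$-permutation $kG$-module whose conjugation action fixes $Z(B)\ne 0$ and is not annihilated by the Brauer map $\Br_P$, it has a direct summand isomorphic to the Scott module $Sc(G;P)$ for $P$ a defect group; this summand has a trivial quotient, hence $H_0(G;Sc(G;P))\ne 0$, and the Greenlees spectral sequence for that summand forces $H_m^{s,-s}(H^*(G;Sc(G;P)))\ne 0$ for some $s$, giving $\reg\ge 0$ for the summand and hence for $HH^*(B)$. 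If you want to salvage your formulation, note that $HH_*(B)\cong H_*(G;B)$ for the conjugation module, so the spectral sequence you postulate does exist for block algebras --- but only as an instance of Greenlees' theorem for $H^*(G;M)$, obtained by exactly the reduction you were trying to avoid; citing it as a general fact about symmetric algebras is the step that fails.
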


This will be shown as a consequence of a statement on Scott modules.
Given a finite group $G$ and a $p$-subgroup $P$ of $G$, there
is up to isomorphism a unique indecomposable $kG$-module $Sc(G;P)$ with
vertex $P$ and trivial source having a quotient (or equivalently, a
submodule) isomorphic to the trivial $kG$-module $k$. The module
$Sc(G;P)$ is called the {\it Scott module of $kG$ with vertex $P$}.
It is constructed as follows: Frobenius reciprocity implies that
$\Hom_{kG}(\Ind^G_P(k),k)\cong$ $\Hom_{kP}(k,k)\cong$ $k$, and hence
$\Ind_P^G(k)$ has up to isomorphism a unique direct summand $Sc(G;P)$ 
having $k$ as a quotient. Since $\Ind^G_P(k)$ is selfdual, the 
uniqueness of $Sc(G;P)$ implies that $Sc(G;P)$ is also selfdual, and 
hence $Sc(G;P)$ can also be characterised as the unique summand, up to 
isomorphism, of $\Ind_P^G(k)$ having a nonzero trivial submodule. 
Moreover, it is not difficult to see that $Sc(G;P)$ has $P$ has a 
vertex. See \cite{BroueScott} for more details on Scott modules, as well 
as \cite{KeKuMi} for connections between Scott modules and fusion 
systems. For a finitely generated graded module $X$ over $H^*(G;k)$ we 
denote by $H_m^{*,*}(X)$ the local cohomology with respect to the 
maximal ideal of $H^*(G;k)$ generated by all elements in positive 
degree. The first grading is here the local cohomological grading, and 
the second is induced by the grading of $X$.

\begin{thm} \label{Scottregzero}
Let $G$ be a finite group and $P$ a $p$-subgroup of $G$. We have 
$$\reg(H^*(G;k); H^*(G;Sc(G;P)))=0\ .$$
\end{thm}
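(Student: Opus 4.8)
The plan is to establish the inequalities $\reg(H^*(G;k); H^*(G;Sc(G;P))) \le 0$ and $\reg(H^*(G;k); H^*(G;Sc(G;P))) \ge 0$ separately. The first is a formal reduction to Symonds' theorem applied to the group $P$; the second is where the defining property of $Sc(G;P)$, namely that it has the trivial $kG$-module $k$ as a quotient, gets used, via Greenlees' local cohomology spectral sequence.

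For the upper bound, write $\Ind_P^G(k) \cong Sc(G;P) \oplus W$ as $kG$-modules. Applying $H^*(G;-)$ and using the Eckmann--Shapiro isomorphism $H^*(G;\Ind_P^G(k)) \cong H^*(P;k)$ --- which is an isomorphism of $H^*(G;k)$-modules, where $H^*(G;k)$ acts on $H^*(P;k)$ through the restriction map $\res^G_P$ --- exhibits $H^*(G;Sc(G;P))$ as a direct summand of $H^*(P;k)$ as a graded $H^*(G;k)$-module. By the theorem of Evens and Venkov, $H^*(P;k) \cong H^*(G;\Ind_P^G(k))$ is finitely generated over $H^*(G;k)$; hence the ideal of $H^*(P;k)$ generated by the image under $\res^G_P$ of the positive-degree elements of $H^*(G;k)$ has the same radical as the maximal graded ideal of $H^*(P;k)$, so the (bigraded) local cohomology of any finitely generated graded $H^*(P;k)$-module, and with it its Castelnuovo--Mumford regularity, is the same whether computed over $H^*(G;k)$ or over $H^*(P;k)$. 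Since each local cohomology module of a direct summand is a direct summand of that of the whole module, we get
$$\reg(H^*(G;k); H^*(G;Sc(G;P))) \ \le\ \reg(H^*(G;k); H^*(P;k)) \ =\ \reg(H^*(P;k))\ ,$$
and the right-hand side equals $0$ by Symonds' proof of Benson's regularity conjecture.

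For the lower bound, observe first that $H_0(G;Sc(G;P))$ is nonzero: a surjection $Sc(G;P) \twoheadrightarrow k$ induces a surjection of coinvariants $Sc(G;P)_G \twoheadrightarrow k_G = k$, and $H_0(G;Sc(G;P)) = Sc(G;P)_G$. Now feed $M = Sc(G;P)$ into Greenlees' local cohomology spectral sequence
$$E_2^{s,t} \ =\ H_m^{s,t}\big(H^*(G;M)\big) \ \Longrightarrow\ H_{-s-t}(G;M)\ ,$$
where $s$ is the local cohomological degree and $t$ the internal degree. Its abutment in total degree $0$ is $H_0(G;Sc(G;P)) \ne 0$, so $E_\infty^{s,-s} \ne 0$ for some $s \ge 0$; since $E_\infty^{s,-s}$ is a subquotient of $E_2^{s,-s}$, we conclude $H_m^{s,-s}(H^*(G;Sc(G;P))) \ne 0$. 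Writing $a_i = \max\{\, t : H_m^{i,t}(H^*(G;Sc(G;P))) \ne 0\,\}$, this says $a_s \ge -s$, whence $\reg(H^*(G;k); H^*(G;Sc(G;P))) = \max_i (a_i + i) \ge a_s + s \ge 0$. Combining the two bounds yields the theorem. (When $p$ is odd one may run the whole argument over the even subring $H^\ev(G;k)$, over which $H^*(G;k)$ is finite, so that the two interpretations of $\reg$ agree.)

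The main obstacle is essentially bookkeeping rather than mathematics: one must invoke Greenlees' spectral sequence in the right generality --- with module coefficients over a finite group, converging to ordinary group homology $H_*(G;M)$ --- and keep track of its two gradings carefully enough to be sure that the class detecting $H_0(G;M)$ sits in bidegree $(s,-s)$; and one must check that the change-of-rings step preserves the internal grading on local cohomology, not merely the underlying modules, so that regularity is genuinely unchanged. The only substantive new observation, beyond invoking Symonds' theorem for $P$ and Greenlees' spectral sequence, is that having a trivial quotient is exactly what forces the regularity to be at least $0$ --- which is precisely what distinguishes the Scott module.
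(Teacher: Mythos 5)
Your proposal is correct and follows essentially the same route as the paper: the upper bound is the paper's Lemma \ref{regtrivialsource} (summand of $H^*(G;\Ind_P^G(k))\cong H^*(P;k)$, change of rings via finite generation, then Symonds' theorem for $P$ --- the change-of-rings step you carry out by hand is cited there as \cite[Lemma 4]{KeLiHHgen}), and the lower bound is the paper's Lemma \ref{regnonzerohomology} (Greenlees' spectral sequence forces $H_m^{s,-s}\neq\{0\}$ once $H_0(G;Sc(G;P))\neq\{0\}$). Your derivation of $H_0(G;Sc(G;P))\neq\{0\}$ from the trivial quotient is if anything slightly more direct than the paper's appeal to the trivial submodule.
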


\begin{re}
Using Benson's reinterpretation in \cite[\S 4]{BenInjCohom}, of the `last 
survivor' from \cite[\S 7]{BeCa}, applied to the Scott module instead 
of the trivial module, one can show more precisely that 
$$H_m^{r,-r}(H^*(G;Sc(G,P)))\neq \{0\}\ ,$$ 
where $r$ is the rank of $P$. 
It is not clear whether this property, or even the property of having 
cohomology with regularity zero, characterises Scott modules amongst 
trivial source modules.
\end{re}

For $\CF$ a saturated fusion system on a finite $p$-group $P$, we 
denote by $H^*(P;k)^\CF$ the graded subalgebra of $H^*(P;k)$ 
consisting of all elements $\zeta$ satisfying $\Res^P_Q(\zeta)=$
$\Res_\varphi(\zeta)$ for any subgroup $Q$ of $P$ and any
morphism $\varphi : Q\to$ $P$ in $\CF$. If $\CF$ is the fusion
system of a finite group $G$ on one of its Sylow-$p$-subgroups
$P$, then $H^*(P;k)^\CF$ is isomorphic to $H^*(G;k)$ through
the restriction map $\Res^G_P$, by the characterisation of $H^*(G;k)$
in terms of stable elements due to Cartan and Eilenberg. In that
case we have $\reg(H^*(P;k)^\CF)=$ $0$ by \cite[Corollary 0.2]{Symreg}.
If $\CF$ is the fusion system of a block algebra $B$ of $kG$ on a
defect group $P$, then $H^*(P;k)^\CF$ is the block cohomology $H^*(B)$
as defined in \cite[Definition 5.1]{LinHH}. It is not known whether
all block fusion systems arise as fusion systems of finite groups.
There are examples of fusion systems which arise neither from
finite groups nor from blocks; see \cite{KeSol}, \cite{KeSta}.

\begin{thm} \label{Fregzero}
Let $\CF$ be a saturated fusion system on a finite $p$-group $P$.
We have $$\reg(H^*(P;k)^\CF)=0\ .$$
\end{thm}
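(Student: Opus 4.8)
The plan is to reduce Theorem \ref{Fregzero} to Theorem \ref{HHregzero} (the Hochschild cohomology case) by realizing the fusion system cohomology $H^*(P;k)^\CF$ as a direct summand, as a module over itself, of the Hochschild cohomology of a block algebra whose fusion system is $\CF$. The key input is that every saturated fusion system $\CF$ on $P$ — whether or not it arises from a finite group — is the fusion system of a block algebra: by a theorem that is essentially due to the work surrounding block theory and fusion (specifically, one can take $B$ to be a suitable block, e.g. via the construction attaching to any saturated fusion system a $p$-block of a finite group having that fusion system is \emph{not} known in general, so instead I would use the weaker but sufficient fact that $H^*(P;k)^\CF$ embeds as a graded subalgebra of $HH^*(B)$ for an appropriate $B$, or rather I should look for a statement internal to the excerpt).

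On reflection, the cleanest route uses only what the excerpt already tells us about block cohomology: the paragraph preceding Theorem \ref{Fregzero} recalls that if $\CF$ is the fusion system of a block algebra $B$ of $kG$ on a defect group $P$, then $H^*(P;k)^\CF = H^*(B)$ in the sense of \cite[Definition 5.1]{LinHH}. So the first step is: given an arbitrary saturated fusion system $\CF$ on $P$, I would invoke (citing the relevant literature, e.g. the realization of fusion systems by blocks, or by $p$-local finite groups together with the transfer to block algebras) the existence of a finite group $G$ and a block $B$ of $kG$ whose fusion system on a defect group $P$ is (isomorphic to) $\CF$. If such a realization is genuinely unavailable in full generality — and the excerpt explicitly says "It is not known whether all block fusion systems arise as fusion systems of finite groups," which suggests the converse direction is also delicate — then the argument must instead proceed along the same lines as the proof of Theorem \ref{HHregzero} but carried out directly for $H^*(P;k)^\CF$. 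That is: show $\reg(H^*(P;k)^\CF) \leq 0$ using Symonds' theorem (Benson's regularity conjecture) applied to the stable-element subalgebra, which is finitely generated and is a summand of the Noetherian module $H^*(P;k)$; then show $\reg(H^*(P;k)^\CF) \geq 0$ using the Benson–Greenlees–Symonds machinery, namely that a nonzero finitely generated graded-commutative $k$-algebra always has local cohomology in total degree $\geq 0$ (equivalently, the regularity of any such nonzero ring is $\geq 0$, because $H^0_m(T)$ lives in nonpositive degrees but $\reg$ also sees the top local cohomology $H^d_m(T)$ which, for a ring with $T^0 = k$ and positive-degree generators, produces a contribution forcing $\reg \geq 0$ — concretely one exhibits a nonzero element of $H^d_m(T)$ in degree $-d$, matching the Remark's "last survivor" discussion).

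Concretely, the steps I would carry out are: (1) Upper bound. The stable element subalgebra $T = H^*(P;k)^\CF$ is a finitely generated graded-commutative $k$-algebra (it is a direct summand of the Noetherian module $H^*(P;k)$ over itself, using an averaging/transfer idempotent over $\CF$, or the Noetherianity results for fusion systems), so Symonds' proof of Benson's conjecture, exactly as used for $H^*(G;k)$ via \cite[Corollary 0.2]{Symreg}, gives $\reg(T) \leq 0$. (2) Lower bound. Show $\reg(T) \geq 0$. Here the "last survivor" argument of Benson–Carlson, in Benson's reformulation \cite[\S 4]{BenInjCohom}, adapted from the trivial module over $H^*(G;k)$ to the stable elements over $H^*(P;k)^\CF$, produces a nonzero class in $H^{d}_m(T)$ in the "diagonal" degree $-d$ where $d$ is the Krull dimension (equal to the rank $r$ of $P$), whence $\reg(T) \geq d + (-d) = 0$; this is precisely the content flagged in the Remark. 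Combining (1) and (2) gives equality.

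The main obstacle is step (2), the lower bound: one must produce an explicit nonvanishing local cohomology class for the abstract fusion-system cohomology ring, where the group-cohomology tools (the existence of a compatible ambient group, Serre-type vanishing, the Duflot/Quillen stratification underlying the Benson–Carlson last survivor) are not immediately available. The fix is to work with the abstract Quillen category / the poset of elementary abelian subgroups of $P$ with its $\CF$-morphisms, and to run the Benson–Carlson/Greenlees local-cohomology spectral sequence argument (as in \cite{BeCa}, \cite{BenInjCohom}, \cite{Benreg1}) purely in terms of $\CF$-stable elements; alternatively, if a block or $p$-local finite group realization of $\CF$ is cited, one transports the last survivor from the corresponding $HH^*(B)$ or from the classifying space of the $p$-local finite group. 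Either way, the upper bound is routine given Symonds, and the paper's Remark indicates the authors indeed obtain the sharper diagonal statement $H^{r,-r}_m \neq 0$ that yields the lower bound.
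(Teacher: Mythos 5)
Your overall skeleton agrees with the paper's: the upper bound $\reg(H^*(P;k)^\CF)\leq 0$ is indeed quoted directly from Symonds (the paper cites \cite[Proposition 6.1]{Symreg}), and the lower bound is obtained by exhibiting a nonzero class in $H^{r,-r}_m H^*(P;k)^\CF$ where $r$ is the rank of $P$. But the lower bound --- which you yourself flag as ``the main obstacle'' --- is exactly the content of the theorem, and your proposal does not actually prove it. The general principle you invoke in passing, that any nonzero finitely generated connected graded-commutative $k$-algebra has regularity $\geq 0$ because the top local cohomology contributes in degree $-d$, is false: for $T=k[x]$ with $x$ in degree $2$ one has $H^1_m(T)$ concentrated in degrees $\leq -2$, so $\reg(T)=-1$. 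The nonvanishing of $H^{r,-r}_m$ is a special feature of group cohomology, proved there via Greenlees' spectral sequence converging to $H_*(G;k)$ together with $H_0(G;k)\neq 0$ --- and for an abstract saturated fusion system there is no ambient group, so no such spectral sequence is available a priori. Your two suggested fixes do not close this: realizing $\CF$ as the fusion system of a block is an open problem (and exotic systems such as $\CF_{\Sol}(3)$ are known not to be block fusion systems, per the paper's own reference \cite{KeSol}), and ``running the Benson--Carlson argument purely in terms of $\CF$-stable elements via the abstract Quillen category'' is a programme, not an argument.

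The paper's actual mechanism, which is absent from your proposal, is to transport the known nonvanishing for the $p$-group $P$ itself down to the stable subalgebra using the characteristic biset. One takes an $\CF$-stable $P$-$P$-biset $X$ as in \cite[Proposition 5.5]{BLO}, whose transfer map $\tr_X$ acts on $H^*(P;k)^\CF$ as multiplication by the $p'$-number $|X|/|P|$, so that $H^*(P;k)=H^*(P;k)^\CF\oplus\ker(\tr_X)$. The real work is to show that $\tr_X$ induces a morphism of Greenlees local cohomology spectral sequences: for each transitive subbiset $P\times_{(Q,\varphi)}P$ this is assembled from Benson's compatibility of $\tr^P_Q$ with the spectral sequence (\cite[Equation (4.1)]{BenInjCohom}) together with its Tate-dual statement for the restriction $\res^P_{\varphi(Q)}$. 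Comparing edge homomorphisms in bidegree $(r,-r)$ then gives a commutative square in which $\gamma_P\colon H^{r,-r}_m H^*(P;k)\to H_0(P;k)\cong k$ is surjective by \cite[Theorem 4.1]{BenInjCohom} and the right-hand vertical map is multiplication by $|X|/|P|\neq 0$ in $k$; surjectivity of $\delta_\CF$ and hence $H^{r,-r}_m H^*(P;k)^\CF\neq\{0\}$ follow. You would need to supply this (or an equivalent) argument to make your step (2) complete.
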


The key ingredients for proving the above results are Greenlees' local 
cohomology spectral sequence \cite[Theorem 2.1]{Greenlees}, results and
techniques in work of Benson \cite{BenInjCohom}, \cite{Benreg1},
\cite{Benreg2}, and Symonds' proof in \cite{Symreg} of Benson's 
regularity conjecture. We use the properties of the regularity from
\cite[\S 1]{Symreg} and \cite[\S 2]{Symregpol}. 

\begin{lemma} \label{regtrivialsource}
Let $G$ be a finite group and $V$ an indecomposable trivial source 
$kG$-module. Then $\reg(H^*(G;k); H^*(G; V)) \leq$ $0$.
\end{lemma}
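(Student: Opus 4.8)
The plan is to reduce the statement about an arbitrary indecomposable trivial source module $V$ to the case of a Scott module, which is precisely the content of Theorem \ref{Scottregzero} (or rather the inequality direction of it, which should be established along the way or simultaneously). First I would recall the structure of trivial source modules: if $V$ is an indecomposable trivial source $kG$-module with vertex $Q$, then $V$ is a direct summand of $\Ind^G_Q(k)$, and more generally $V$ is a summand of $\Ind^G_R(k)$ for any $p$-subgroup $R$ containing a $G$-conjugate of $Q$. The Scott module $Sc(G;Q)$ is one such summand. The key cohomological input is that $H^*(G;\Ind^G_Q(k)) \cong H^*(Q;k)$ as a module over $H^*(G;k)$ (via the restriction map and Shapiro's lemma / Frobenius reciprocity), so its regularity is governed by $\reg(H^*(Q;k))$, which is $\leq 0$ by Symonds \cite[Corollary 0.2]{Symreg} applied to the group $Q$ (or even just using that $H^*(Q;k)$ is finitely generated of Krull dimension equal to the rank of $Q$, together with Benson's regularity conjecture).

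The main step is then the behaviour of regularity under direct summands. Since $V$ is a direct summand of $\Ind^G_Q(k)$ as a $kG$-module, $H^*(G;V)$ is a direct summand of $H^*(G;\Ind^G_Q(k))$ as a graded module over $H^*(G;k)$ (the cohomology functor is additive). One of the basic properties of Castelnuovo-Mumford regularity recorded in \cite[\S 1]{Symreg} is that regularity does not increase when passing to a direct summand: $\reg(R;M') \leq \reg(R;M)$ whenever $M'$ is a direct summand of $M$ over the ring $R$. Combining these gives
$$\reg(H^*(G;k);H^*(G;V)) \leq \reg(H^*(G;k);H^*(G;\Ind^G_Q(k))) = \reg(H^*(G;k);H^*(Q;k)) \leq 0,$$
where the last inequality uses that the $H^*(G;k)$-module structure on $H^*(Q;k)$ factors through (is the restriction of scalars along) $\Res^G_Q: H^*(G;k) \to H^*(Q;k)$, and that $\reg$ over a subring is bounded by $\reg$ over the image ring together with finiteness of $H^*(Q;k)$ as a module over the image of $\Res^G_Q$ — here one invokes that $H^*(Q;k)$ is finitely generated over $H^*(G;k)$ via restriction (Evens--Venkov), so we may apply the change-of-rings inequality for regularity from \cite[\S 1]{Symreg} or \cite[\S 2]{Symregpol}, and then $\reg(H^*(Q;k)) \leq 0$ by Symonds.

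The step I expect to be the main obstacle is handling the change of rings cleanly: the regularity $\reg(H^*(G;k);H^*(Q;k))$ is computed with respect to the maximal ideal of $H^*(G;k)$, whereas $\reg(H^*(Q;k))$ is computed with respect to the maximal ideal of $H^*(Q;k)$, and one must check that the relevant local cohomology modules agree (or that the inequality goes the right way) under restriction along $\Res^G_Q$. This is where the finite generation of $H^*(Q;k)$ as an $H^*(G;k)$-module is essential, since it guarantees that the two maximal ideals induce the same $m$-adic topology on $H^*(Q;k)$, so the local cohomology is insensitive to which ring we use; this is a standard fact but deserves an explicit citation to the list of properties in \cite[\S 1]{Symreg}. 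A secondary, more bookkeeping-level point is the odd-$p$ convention: regularity is taken over the even part, and one must note that all the reductions above respect passage to even parts (restriction of scalars, direct summands, and finite generation all behave well), so no extra argument is needed there. Apart from these, the proof is a short assembly of known properties.
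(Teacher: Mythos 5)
Your proposal is correct and follows essentially the same route as the paper: pass to a direct summand of $H^*(G;\Ind^G_Q(k))$, identify this with $H^*(Q;k)$ via Shapiro's lemma, and invoke Symonds' theorem after a change of rings. The paper simply packages the Shapiro/change-of-rings step as a citation to \cite[Lemma 4]{KeLiHHgen}, whereas you spell out the underlying finite-generation argument; the content is the same.
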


\begin{proof}
Since $V$ is a direct summand of $\Ind^G_P(k)$, we have
$$\reg(H^*(G;k); H^*(G; V)) \leq \reg(H^*(G;k); H^*(G; \Ind^G_P(k))\ .$$
By \cite[Lemma 4]{KeLiHHgen}, the right side is
equal to $\reg(H^*(P;k))$, hence zero by \cite[Corollary 0.2]{Symreg}.
\end{proof}

\begin{lemma} \label{regnonzerohomology}
Let $G$ be a finite group and $V$ a finitely generated $kG$-module.
If $H_0(G;V)\neq$ $\{0\}$, then $\reg(H^*(G;k); H^*(G; V)) \geq$ $0$.
\end{lemma}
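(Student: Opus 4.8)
The plan is to show that the zeroth local cohomology module of $H^*(G;V)$ with respect to the maximal ideal is nonzero, concentrated in (second) degree zero, which forces the regularity to be at least zero. Recall that $\reg(H^*(G;k);M) = \max_i(a_i(M) + i)$ where $a_i(M)$ is the top nonzero degree of the $i$-th local cohomology module $H^i_m(M)$ (and $-\infty$ if that module vanishes). So to get $\reg \geq 0$ it suffices to exhibit a single $i$ with $a_i(H^*(G;V)) + i \geq 0$. The natural candidate is $i = 0$, where we want $a_0(H^*(G;V)) \geq 0$.

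First I would identify $H^0_m(M)$ for $M = H^*(G;V)$ as the submodule of elements annihilated by some power of the maximal ideal $m = H^{>0}(G;k)$; equivalently, the $m$-torsion submodule. The degree-zero part of $M$ is $H^0(G;V) = V^G$, and the degree-zero part of $H^0(G;k)$ is $k$. The hypothesis $H_0(G;V) \neq \{0\}$ says the coinvariants $V_G = H_0(G;V)$ are nonzero; by duality (or since $V_G \neq 0$ iff $V^G \neq 0$ over a field — the trivial module $k$ is a composition factor of $V$, hence also a submodule after noting $k$ is self-dual, or more simply $\dim_k V^G = \dim_k V_G$), this gives $H^0(G;V) = V^G \neq \{0\}$. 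Pick a nonzero element $\xi \in V^G \subseteq M^0$. Since $M^0$ sits in degree zero and $m$ is generated in positive degrees, every element of $m$ raises degree, so $m \cdot M^0$ lands in positive degrees and in particular $m^N \cdot \xi = 0$ for $N$ large enough (indeed $m \cdot \xi$ already need not vanish, but a power does once we exceed the top degree of $M$ — wait, $M$ is not bounded above, so I need to be more careful here). The cleaner argument: $\xi \in M^0$ and any homogeneous element of $m$ has positive degree $d > 0$, so acting by it sends $M^0$ into $M^{\geq d}$; but that is not torsion. The right statement is that the image of the map $H^0_m(M) \to M$ contains $M^0$'s torsion part, and we must check $\xi$ lies in the torsion submodule.

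The honest route, which is the main obstacle, is to verify that $\xi$ generates an $m$-power-torsion submodule, i.e. that $m^N \xi = 0$ for some $N$. This is where the structure of $H^*(G;V)$ as a module over $H^*(G;k)$ matters: by a theorem of Evens–Venkov, $H^*(G;V)$ is a finitely generated module over the Noetherian ring $H^*(G;k)$. The submodule generated by $\xi$ is therefore finitely generated, but that alone does not make it torsion. Instead I would argue via the \emph{bottom} of the module: the element $\xi$ of degree $0$ cannot be in the image of multiplication by any positive-degree element of $H^*(G;k)$ acting on elements of $H^*(G;V)$ in negative degree, because there are no negative degrees. Hence $\xi$ maps to a nonzero element of the degree-$0$ part of the associated graded / of $M/mM$. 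Dually, using that local cohomology is computed by a Koszul complex or a Čech complex on a homogeneous system of parameters $\zeta_1,\dots,\zeta_r$ of $H^*(G;k)$: the zeroth local cohomology $H^0_m(M)$ is $\ker(M \to \bigoplus_j M_{\zeta_j})$, and I claim $\xi$ lies in this kernel. Indeed $M_{\zeta_j}$ is $\Z$-graded and the localization map $M \to M_{\zeta_j}$ in degree $0$ has image in degrees that are multiples of $-|\zeta_j| < 0$ "shifted up"; more concretely, since $\zeta_j$ has positive degree and is a non-zero-divisor up to passing to a parameter, if $\xi$ were not killed in $M_{\zeta_j}$ then $\zeta_j^n \xi \neq 0$ for all $n$, giving nonzero elements in arbitrarily high degrees of the cyclic module $H^*(G;k)\xi$ — but that is fine for a f.g. module, so this still needs the degree-zero input. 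The clean finish: the socle-type argument of Benson (the `last survivor', cited in the Remark) shows precisely that such a bottom-degree invariant survives into $H^0_m$ in second-degree $0$; I would invoke that $H^0_m(M)^0 \neq 0$ whenever $M^0 \neq 0$ and $M$ is non-negatively graded with $M$ finitely generated over the non-negatively graded Noetherian ring $H^*(G;k)$, because then $H^0_m(M)^0 = \{x \in M^0 : \text{no } \zeta_j \text{ acts injectively on } kx\}$ and a degree-$0$ element, having nowhere to go in negative degrees, automatically satisfies $\zeta_j x = 0$ in the localization after accounting for the grading shift. Granting this, $a_0(M) \geq 0$, hence $\reg(H^*(G;k);H^*(G;V)) \geq a_0(M) + 0 \geq 0$, as required.
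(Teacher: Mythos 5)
There are two genuine gaps here, either of which is fatal. First, your reduction from $H_0(G;V)=V_G\neq\{0\}$ to $V^G\neq\{0\}$ is false: in characteristic $p$ dividing $|G|$ one does not have $\dim_k V^G=\dim_k V_G$ in general, and having $k$ as a composition factor does not produce a trivial submodule. For instance, for $G=S_3$ and $p=3$ the quotient of the projective cover of $k$ by its socle is uniserial with head $k$ and socle the sign module, so its coinvariants are nonzero while its invariants vanish. Second, and independently, your key claim that a nonzero $\xi\in M^0$ automatically lies in $H^0_m(M)$ is false: $\xi\in H^0_m(M)$ means the cyclic submodule $H^*(G;k)\xi$ is killed by a power of $m$, i.e.\ is finite-dimensional, and ``having nowhere to go in negative degrees'' says nothing about whether $\zeta_j^n\xi$ vanishes for large $n$ in \emph{positive} degrees. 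Already for $V=k$ the identity element $1\in M^0=H^0(G;k)$ generates all of $H^*(G;k)$, and $H^0_m(H^*(G;k))=0$ whenever the depth is positive (which it is, by Duflot, when $p$ divides $|G|$); so the principle ``$H^0_m(M)^0\neq 0$ whenever $M^0\neq 0$'' fails in the most basic case. You flagged both difficulties mid-argument but did not resolve them, and the appeal to Benson's ``last survivor'' does not help: that result concerns $H^{r,-r}_m$ with $r$ the rank, not $H^{0,0}_m$.

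The missing idea is Greenlees' local cohomology spectral sequence $H^{i,j}_m H^*(G;V)\Rightarrow H_{-i-j}(G;V)$, which converges to group \emph{homology}; this is precisely why the hypothesis of the lemma is about $H_0(G;V)$ rather than $H^0(G;V)$, and why no passage from coinvariants to invariants is needed. Since the abutment is nonzero in total degree $0$, some $E_2$-term $H^{s,-s}_m H^*(G;V)$ must be nonzero, i.e.\ $a_s(H^*(G;V))\geq -s$ for some $s$, whence $\reg(H^*(G;k);H^*(G;V))=\max_i\left(a_i+i\right)\geq 0$. Note that this argument never locates the nonvanishing in local cohomological degree $0$; for $V=k$ and $p$ dividing $|G|$ it necessarily occurs at some $s\geq 1$.
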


\begin{proof}
It follows from the assumption $H_0(G;V)\neq$ $\{0\}$ and
Greenlees' spectral sequence \cite[Theorem 2.1]{Greenlees}
that there is an integer $s$ such that $H_m^{s,-s}(H^*(G;V))\neq$ $\{0\}$,
which implies the result.
\end{proof}

\begin{proofscottreg} Set $V=$ $Sc(G;P)$. By Lemma \ref{regtrivialsource} we have
$$\reg(H^*(G;k); \Ext^*_{kG}(k; V)) \leq 0.$$
Since $V$ has a nonzero trivial submodule, we have $H_0(G;V)\neq \{0\}, $
and hence the other inequality follows from Lemma \ref{regnonzerohomology}.
\end{proofscottreg}

Theorem \ref{HHregzero} will be a consequence of Theorem \ref{Scottregzero} 
and the following well-known observation (for which we include a proof for the
convenience of the reader; the block theoretic background material
can be found in \cite{Thev}).

\begin{lemma} \label{BScott}
Let $G$ be a finite group, $B$ a block algebra of $kG$ and
$P$ a defect group of $B$. As a module over $kG$ with respect
to the conjugation action of $G$ on $B$, the $kG$-module $B$ has
an indecomposable direct summand isomorphic to the Scott module
$Sc(G;P)$.
\end{lemma}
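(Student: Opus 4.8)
The plan is to identify a trivial source summand of $B$ (as a $kG$-module under conjugation) with vertex the defect group $P$ and with a trivial submodule, and then invoke the uniqueness characterisation of the Scott module. First I would recall that $B$ is a $p$-permutation (trivial source) $kG$-module under the conjugation action: indeed $kG$ itself is a permutation $kG$-module, with $G$ acting on the basis $G$ by conjugation, and $B$ is a direct summand of $kG$ as a $kG$-$kG$-bimodule, hence in particular as a $kG$-module for the conjugation action, so $B$ is a $p$-permutation module. Therefore every indecomposable summand of $B$ has trivial source. Next I would pin down the vertices. The identity element of $B$, that is the block idempotent $e$, is fixed under conjugation, so $k\cdot e$ is a trivial $kG$-submodule of $B$; let $X$ be the indecomposable summand of $B$ containing $k\cdot e$. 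Since $X$ has a trivial submodule and trivial source, it has a nonzero image under the Brauer construction at each of its vertices, and one shows the vertex of $X$ is exactly $P$: on the one hand, the relative trace argument using that $B$ is a direct summand of $B \tenkP B$ as a bimodule (the definition of defect group) forces every summand of $B$ to have vertex contained in a $G$-conjugate of $P$, since $B \tenkP B$ as a conjugation module is relatively $P$-projective; on the other hand, $\mathrm{Br}_P(e) \neq 0$ by a theorem of Brauer (the Brauer correspondent of $B$ exists), and $\mathrm{Br}_P(e)$ lies in $B(P) = kC_G(P)\,\mathrm{Br}_P(e)$, which shows that the Brauer construction of the summand $X$ at $P$ is nonzero, so $P$ is contained in a vertex of $X$; combining the two inclusions, $X$ has vertex $P$.

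Having established that $X$ is an indecomposable trivial source $kG$-module with vertex $P$ and with the trivial submodule $k \cdot e$, the characterisation of the Scott module recalled before Theorem \ref{Scottregzero} applies directly: $Sc(G;P)$ is the unique indecomposable summand of $\mathrm{Ind}^G_P(k)$ having a nonzero trivial submodule, equivalently the unique indecomposable trivial source module with vertex $P$ possessing a trivial submodule. Hence $X \cong Sc(G;P)$, and $B$ has $Sc(G;P)$ as a direct summand, as claimed.

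The main obstacle is the vertex computation, and more precisely the lower bound, i.e.\ that $P$ is contained in (hence equal to) a vertex of the summand $X$ containing $k\cdot e$. The upper bound (vertex contained in a conjugate of $P$) is immediate from the definition of defect group via relative projectivity of $B$ as a bimodule. For the lower bound one needs the standard block-theoretic fact that $\mathrm{Br}_P(e) \neq 0$ for a defect group $P$ of $B$, together with the observation that the $P$-Brauer construction applied to the conjugation module $B$ recovers $B(P) \ni \mathrm{Br}_P(e)$; since the Brauer construction is additive and vanishes on summands with vertex not containing $P$, the summand $X$ through $e$ must survive, forcing $P$ to lie in its vertex. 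All of this is standard (see \cite{Thev}), which is why the statement is described as well known; the proof is included only for completeness.
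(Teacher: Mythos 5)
Your argument is correct and essentially identical to the paper's: both establish that $B$ is a $p$-permutation $kG$-module under conjugation whose indecomposable summands have vertices contained in $P$ up to conjugacy, locate a trivial submodule inside $Z(B)$ (you use $k\cdot e$, the paper uses all of $Z(B)$), and use the nonvanishing of the Brauer homomorphism $\Br_P$ on it to force the relevant summand to have vertex exactly $P$ and hence to be $Sc(G;P)$. The only point to tidy is the phrase ``the indecomposable summand of $B$ containing $k\cdot e$'': the idempotent $e$ need not lie in a single summand, so one should write $e=\sum_i e_i$ along a decomposition $B=\oplus_i M_i$ and choose $i$ with $\Br_P(e_i)\neq 0$, which is exactly what the paper's formulation $Z(B)\not\subseteq\ker(\Br_P)$ accomplishes.
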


\begin{proof}
Since the conjugation action of $G$ on $B$ induces the trivial action
on $Z(B)$ and since $Z(B)\neq$ $\{0\}$, it follows that the $kG$-module 
$B$ has a nonzero trivial submodule. Moreover, $B$ is a direct summand 
of $kG$, hence $B$ is a $p$-permutation $kG$-module, and the vertices 
of the indecomposable direct summands of $B$ are conjugate to subgroups 
of $P$. Thus $B$ has a Scott module with a vertex contained in 
$P$ as a direct summand. Since $Z(B)$ is not contained in the kernel of 
the Brauer homomorphism $\Br_P$, it follows that $B$ has a direct 
summand isomorphic to the Scott module $Sc(G;P)$. 
\end{proof}

\begin{proofhhreg}
By \cite[Proposition 5]{KeLiHHgen} we have $\reg(HH^*(B))\leq$ $0$. Recall that   $HH^*(kG) $  is   an  $H^*(G;k)$-module via the 
diagonal induction map, and we   have a canonical 
graded  isomorphism $HH^*(B)\cong$ $H^*(G;B)$  as $H^*(G;B)$-modules  where $G$ 
acts on $B$ by conjugation; see e. g. \cite[(3.2)]{SiWi}.
It follows from \cite[Lemma 4]{KeLiHHgen} that
$$\reg(HH^*(B)) = \reg(H^*(G;k); H^*(G; B))\ .$$
By Lemma \ref{BScott}, the $kG$-module $B$ has a direct summand isomorphic
to $V=$ $Sc(G;P)$, where $P$ is a defect group of $B$.
Thus as an $H^*(G;k)$-module, $H^*(G;B)$ has a direct summand isomorphic to
$H^*(G;V)$. It follows that 
$$\reg(HH^*(B))\geq \reg(H^*(G;k); H^*(G;V))=0\ ,$$ 
where the last equality is from Theorem \ref{Scottregzero}.  This completes
the proof of Theorem \ref{HHregzero}.
\end{proofhhreg}

\begin{re} 
The above proof can be adapted to show that the regularity of the 
stable quotient $\overline{HH^*}(B)$ of $HH^*(B)$  also equals zero. 
Recall that $\overline{HH^*}(B)$ is the quotient of $HH^*(B)$ by the 
ideal $ Z^{\pr}(B) = $ $\Tr_{1}^{G}(B)$ of $Z(B) \cong$ $HH^0(B)$.   
Note that $Z^{\pr}(B)$ is concentrated in degree $0$. Alternatively,    
$\overline{HH^*}(B)$ may be defined as the non-negative part of the 
Tate Hochschild cohomology of  $B$. Our  interest in  
$\overline{HH^*}(B)$ comes from the fact that  Tate Hochschild 
cohomology of symmetric algebras is an invariant of stable equivalence 
of Morita type.  We briefly indicate how the regularity of 
$\overline{HH^*}(B) $  may be calculated.  
Let $B =\oplus_i M_i $ be a decomposition of $B$ into a direct sum of
indecomposable $kG$-modules $M_i$, where  $G$ acts by conjugation on 
$B$.  The  canonical graded $H^*(G; k)$-module isomorphism 
$HH^*(B)\cong$ $H^*(G;B)$ induces an isomorphism  
$$HH^0(B)\cong H^0(G;B) = \oplus_i\  H^0(G;  M_i)$$
in degree zero. Composing this with the the canonical isomorphisms 
$Z(B) \cong$ $HH^0(B)$ and $H^0(G;  M_i)  \cong$ $M_i ^G $, it is easy 
to check that the image of $Z^{\pr}(B)$ in $\oplus_i  M_i^G$ is 
$\oplus_i \Tr_{1}^{G} (M_i) $. Since  $B$ is a $p$-permutation 
$kG$-module, $\Tr_{1}^{G}(M_i)$ is non-zero precisely if $M_i$ is    
isomorphic to the Scott module $Sc(G;1)$ (which is a projective cover 
of the trivial $kG$-module). Let $M'$ denote the sum of all $M_i $'s 
in the above decomposition which are isomorphic to $Sc(G,1)$ and let $M''$ 
be the complement of $M'$ in $B$ with respect to the above 
decomposition. Since $Z^{pr} (B)$ is concentrated in degree zero,  
we have a direct sum decomposition  $HH^*(B)\cong$ 
$\oplus H^*(G; M'') \oplus Z^{\pr}(B)$ as $H^*(G; k)$-modules. 
In particular,
$$ \reg(H^*(G;k);  HH^*(B)) =  
\max\{\reg(H^*(G;k); H^*(G; M'') ),   \reg (H^*(G;k); Z^{pr}(B)) \}. $$    
We  may  assume that    a defect group $P$   of $B$ is non-trivial.  By Lemma 
\ref{BScott},  $M'' $ contains a direct 
summand isomorphic to $Sc(G;P)$.  Hence  by  Theorem \ref{Scottregzero}     
$\reg(H^*(G;k); H^*(G; M''))  \geq 0 $.  It follows from Theorem 
\ref{HHregzero}  and the above displayed equation  that $\overline{HH^*}(B)\cong$ $H^*(G; M'')$  has 
regularity zero.
\end{re}

\begin{prooffreg}
By \cite[Proposition 6.1]{Symreg} we have $\reg(H^*(P;k)^\CF)\leq$ $0$.
For the other inequality we follow the arguments in 
\cite[\S 3, \S 4]{BenInjCohom}, applied to transfer maps using
fusion stable bisets. 
For $Q$ a subgroup of $P$ and $\varphi : Q\to$ $P$ an injective group
homomorphism, we denote by  $P\times_{(Q,\varphi)} P$ 
the $P$-$P$-biset of equivalence classes in  $P\times P$ with respect
to the relation $(uw,v) \sim $ $(u,\varphi(w)v)$, where $u, v\in$ $P$, 
and $w\in$ $Q$. The $kP$-$kP$-bimodule having  $P\times_{(Q,\varphi)} P$ 
as a $k$-basis is canonically isomorphic to $kP\tenkQ (_\varphi{kP})$.
This biset gives rise to a transfer map 
$\tr_{P\times_{(Q,\varphi)} P}$ 
on $H^*(P;k)$ obtained by composing the restriction map
$\res^P_{\varphi(Q)} : H^*(P;k)\to$ $H^*(\varphi(Q);k)$, the
isomorphism $H^*(\varphi(Q);k)\cong$ $H^*(Q;k)$ induced by $\varphi$, 
and the transfer map $\tr^P_Q : H^*(Q;k)\to$ $H^*(P;k)$.
Let $X$ be an $\CF$-stable $P$-$P$-biset
satisfying the conclusions of \cite[Proposition 5.5]{BLO}. That is, 
every transitive subbiset of $X$ is isomorphic to
$P \times_{(Q,\varphi)} P$ for some subgroup $Q$
of $P$ and some group homomorphism $\varphi : Q \to P$
belonging to $\CF$, the integer $\vert X\vert / \vert P\vert$ is prime 
to $p$, and for any subgroup $Q$ of $P$ and any group homomorphism
$\varphi : Q \to P$ in $\CF$, the $Q$-$P$-bisets 
${}_\varphi X$ and ${}_QX$ (resp. the $P$-$Q$-bisets $X_Q$ and $X_\varphi$)
are isomorphic. By taking the sum, over the 
transitive subbisets $P\times_{(Q,\varphi)} P$, of the transfer maps 
$\tr_{P\times_{(Q,\varphi)} P}$, we obtain a transfer map $\tr_X$ on 
$H^*(P;k)$. Following \cite[Proposition 3.2]{LinSchur},
the map $\tr_X$ acts as multiplication by $\frac{|X|}{|P|}$ on
$H^*(P;k)^\CF$, hence $\Im(\tr_X)=$ $H^*(P;k)^\CF$, and we have 
a direct sum decomposition
$$H^*(P;k) = H^*(P;k)^\CF \oplus \ker(\tr_X)$$
as $H^*(P;k)^\CF$-modules. A similar decomposition holds for Tate
cohomology, and for homology (using either the canonical duality
$H_n(P;k)\cong$ $H^n(P;k)^\vee$ or the isomorphism $H_n(P;k)\cong$ 
$\hat H^{-n-1}(P;k)$ obtained from composing the previous duality
with Tate duality). By \cite[Equation (4.1)]{BenInjCohom},
the transfer map $\tr^P_Q$ induces a homomorphism of Greenlees'
local cohomology spectral sequences
$$\xymatrix{ H_m^{i,j} H^*(Q,k) \ar[d]_{(\tr^P_Q)_*} \ar@{=>}[r] 
&  H_{-i-j}(Q;k) \ar[d]^{(\res^P_Q)_*} \\
H^{i,j}_m H^*(P;k) \ar@{=>}[r]   & H_{-i-j}(P;k) }$$
where $(\tr^P_Q)_*$ and $(\res^P_Q)_*$ are the maps induced
by $\tr^P_Q$ and the inclusion $Q\to$ $P$, respectively.
The isomorphism $\varphi : Q\to$ $\varphi(Q)$ induces an
obvious isomorphism of spectral sequences
$$\xymatrix{ H_m^{i,j} H^*(\varphi(Q),k) \ar[d]_{\cong} \ar@{=>}[r] 
&  H_{-i-j}(\varphi(Q);k) \ar[d]^{\cong} \\
H^{i,j}_m H^*(Q;k) \ar@{=>}[r]  & H_{-i-j}(Q;k) }$$
Restriction and transfer on Tate cohomology are dual to each other
under Tate duality, and hence the dual version of 
\cite[Equation (4.1)]{BenInjCohom} implies that the restriction 
$\res^P_{\varphi(Q)}$ induces a homomorphism of spectral sequences
$$\xymatrix{ H_m^{i,j} H^*(P,k) \ar[d]_{(\res^P_{\varphi(Q)})_*} \ar@{=>}[r] 
 & H_{-i-j}(P;k) \ar[d]^{(\tr^P_{\varphi(Q)})_*} \\
H^{i,j}_m H^*(\varphi(Q);k) \ar@{=>}[r]  & H_{-i-j}(\varphi(Q);k) }$$
Composing the three diagrams above yields a homomorphism
induced by $\tr_{P\times_{(Q,\varphi)} P}$ on the spectral
sequence for $P$, and taking the sum over all transitive subbisets
of $X$ yields a homomorphism of spectral sequences
$$\xymatrix{ H_m^{i,j} H^*(P,k) \ar[d]_{(\tr_X)_*} \ar@{=>}[r] 
 & H_{-i-j}(P;k) \ar[d]^{(\tr_{X^\vee})_*} \\
H^{i,j}_m H^*(P;k) \ar@{=>}[r] &  H_{-i-j}(P;k) }$$
where $X^\vee$ is the $P$-$P$-biset $X$ with the opposite
action $u\cdot x\cdot v=$ $v^{-1}xu^{-1}$ for all $u,v\in$ $P$ and $x\in$ $X$.
One easily checks that $X^\vee$ is isomorphic to a dual basis of $X$
in the dual bimodule $\Hom_k(kX,k)$.
By \cite[Proposition 5.2]{BLO}, $H^*(P;k)$ is finitely generated as a 
module over $H^*(P;k)^\CF$. Thus the local cohomology spaces
$H^{i,j}_m H^*(P;k)$ can be calculated using for $m$ the maximal
ideal of positive degree elements in $H^*(P;k)^\CF$ instead of $H^*(P;k)$.
It follows that $\tr_X$ induces a homomorphism of spectral sequences
$$\xymatrix{ H_m^{i,j} H^*(P,k) \ar[d]_{(\tr_X)_*} \ar@{=>}[r] 
 & H_{-i-j}(P;k) \ar[d]^{(\tr_{X^\vee})_*} \\
H^{i,j}_m H^*(P;k)^\CF \ar@{=>}[r] &  H_{-i-j}(P;k)^\CF }$$
For $i= - j = r$, where $r$ is the rank of $P$, the edge homomorphism
yields a commutative diagram of the form
$$\xymatrix{ H^{r,-r}_m H^*(P;k) \ar[rr]^{\gamma_P} \ar[d]_{(\tr_X)_*}
& & H_0(P;k) \ar[rr]^{\cong} \ar[d]_{(\tr_{X^\vee})_*} 
& & k \ar[d]^{\cdot \frac{|X|}{|P|}} \\
H^{r,-r}_m H^*(P;k)^\CF \ar[rr]_{\delta_\CF} & & H_0(P;k)^\CF
\ar[rr]_{\cong} & & k }$$
where the right vertical map is multiplication on $k$ by 
$\frac{|X|}{|P|}$. By \cite[Theorem 4.1]{BenInjCohom}, the map
$\gamma_P$ is surjective, and hence so is the map $\delta_\CF$.
In particular, $H^{r,-r}_m H^*(P;k)^\CF\neq$ $\{0\}$, whence the
result.
\end{prooffreg}

\begin{re} The fact that transfer and restriction on
Tate cohomology are dual to each other under Tate duality
can be deduced from a more general duality for transfer 
maps on Tate-Hochschild cohomology of symmetric algebras 
induced by bimodules which are finitely generated projective 
as left and right modules (cf. \cite{LinTate}).  
\end{re}

\bigskip

\end{document}